\newtheorem{theorem}{Theorem}[section]
\newtheorem{lemma}{Lemma}[section]
\newtheorem{proposition}{Proposition}[section]
\theoremstyle{definition}
\theoremstyle{remark}
\numberwithin{equation}{section}
\newlength{\myfboxsep}
\newlength{\mywidth}
\DeclareMathOperator{\Tr}{Tr}
\newcommand{\comment}[1]{}
\newcommand{\bw}{\mathcal W}
\begin{document}
\bibliographystyle{abbrvnat}
\title[Block Toeplitz matrices]{Limiting Spectral Distribution of Block Matrices
with  Toeplitz Block Structure}
\author[R. Basu]{Riddhipratim Basu}
\address{Department of Statistics,
 University of California, Berkeley}
\email{riddhipratim@stat.berkeley.edu}
\author[A.Bose]{Arup Bose}
\thanks{ A. Bose's research
supported by J.C.Bose National Fellowship, Dept. of Science and Technology,
Govt. of India.}
\address{Statistics and Mathematics Unit,  Indian
Statistical Institute 203 B.T.~Road Kolkata 700108 India}
\email{bosearu@gmail.com}
\author[S. Ganguly]{Shirshendu Ganguly}
\address{Department of Mathematics, University of Washington,
Seattle}
\email{sganguly@math.washington.edu}
\author[R. S. Hazra]{Rajat Subhra Hazra}
\address{Institut f\"ur Mathematik\\
Universit\"at  Z\"urich\\
Winterthurerstrasse 190\\
CH-8057, Z\"urich } \email{rajatmaths@gmail.com}

\date{November 04, 2011}

\begin{center}
\end{center}

\begin{abstract}
We study two specific symmetric random block Toeplitz (of dimension
$k \times k$) matrices:  where the blocks (of size $n \times n$) are
(i) matrices with i.i.d. entries
 and (ii) asymmetric Toeplitz matrices. Under suitable
 assumptions on the entries, their  limiting spectral distributions (LSDs) exist (after scaling by $\sqrt{nk}$)
when (a) $k$ is fixed and $n \to
\infty$ (b) $n$ is fixed and $k\rightarrow \infty$
 (c)  $n$ and $k$ go to $\infty$ simultaneously. Further the LSD's
 obtained in (a) and (b) coincide with those in (c) when $n$ or
 respectively $k$ tends to infinity. This limit in (c) is the
 semicircle law in case (i). In Case (ii) the limit is related to
the limit of the random symmetric Toepiltz matrix as obtained by
 \cite{bry} and \cite{hammond:miller}.
 \end{abstract}
\subjclass[2010]{Primary 60B20; Secondary 60B10, 15B05}
 \keywords{Block random matrices,
 limiting spectral distribution, Toeplitz matrix, Wigner matrix.}
\maketitle
\section{Introduction}
Limiting Spectral Distribution (LSD) of block matrices with large
random blocks has been studied in the literature. Under certain
moment assumptions, \cite{Oraby} proved a general existence
theorem for LSD of such matrices with large Wigner blocks and
finite symmetric block structure. \cite{bana:bose} later extended
these results to relax the moment assumptions and also to include
Wigner type blocks. They also showed the limit to be the
semicircular law when both the size of a block and the size of the
block structure is increased in a suitable way.

Our focus in this article is on matrices with the Toeplitz block
structure. Matrices of the form $B_k=((x_{i-j}))_{1\leq i, j \leq
k}$ where $x_i$ is some sequence of numbers are called
\textit{Toeplitz matrices}. If the elements $x_i$ are themselves $n
\times n$ matrices, say $A_{n,i}\equiv A_i$, with $A_i=A_{-i}^T$ for
all $i$ then the corresponding \text{symmetric} matrices (say $B_k
\star \{A_{n,i}\}$) are \textit{block matrices with asymmetric
Toeplitz block structure}. Visually,
\begin{equation}\label{def: toep4}
         B_k \star \{A_{n,i}\}= \left[ \begin{array} {cccccc}
            A_{0} & A_{-1} & A_{-2} & \ldots & A_{-(k-2)} & A_{-(k-1)} \\
            A_{1} & A_{0} & A_{-1} & \ldots & A_{-(k-3)} & A_{-(k-2)} \\
            A_{2} & A_{1} & A_{0} & \ldots & A_{-(k-4)} & A_{-(k-3)} \\
                  &       &       & \vdots &       &   \\
            A_{k-1} & A_{k-2} & A_{k-3} & \ldots & A_{1} & A_{0}
            \end{array} \right].
\end{equation}
%where $A_i$ are $n\times n$ matrices.

Block Toeplitz matrices arise in many aspects of mathematics,
physics and technology.  \cite{gaza} researched the asymptotic
behaviour of eigenvalue distribution for deterministic block Toeplitz matrices.
 \cite{oraby2}  proved the
existence of the LSD for random block Toeplitz matrices with
self-adjoint $\{A_{n,i}\}$ as $n\rightarrow \infty$. They assumed
that the entries are complex Gaussian. The proofs use an
operator-valued free probability approach and Wick's formula for
moments of Gaussian variables. \cite{block} study the LSD of these
block Toeplitz matrices ($B_k \star \{A_{n,i}\}$) when $A_{n,i}$
have i.i.d. entries (with uniformly bounded moments of all order)
 as
$k\rightarrow \infty $ and show the existence of the LSD. They also
show that when in turn $n \to \infty$, the above limit converges to
the semicircle law.
%the iterated limit (of what???) $\lim_{n\rightarrow \infty}\lim_{k\rightarrow
%\infty}$ to be the semicircular law.
They use the moment method and explicit calculations based on the
eigenvalue trace formula.
%
%
%As pointed out above,  \cite{block} also consider the asymmetric
%Toeplitz block structure with i.i.d.\ matrices as the individual
%blocks.
%%% and show the convergence of moments under some additional moment assumption
%%than ours.
%They establish the limit as $k\to\infty$ and also show this limit
%tends to the semicircle law as $n\to\infty$. This coincides with the
%limit of type (c) in the situation of Case (i).

We deal with two specific symmetric block matrices where the blocks
are random Toeplitz matrices with $A_i=A_{-i}^T$ for all $i$ and are
otherwise independent:\vskip5pt

Case  (i) The entries of $A_i$ are i.i.d. (as in
\cite{block}).\vskip5pt

Case  (ii) $A_i$ are (asymmetric) Toeplitz matrices.
% and in either case we also assume that
This symmetric matrix with asymmetric Toeplitz block structure and
individual Toeplitz blocks does not seem to have been studied in
literature. \vskip5pt

We show that in both cases, their  LSD's exist (under finiteness of
second moments) when\vskip5pt

(a) $k$ is fixed and $n \to \infty$

(b) $n$ is fixed and $k\rightarrow \infty$

 (c)  $n$ and $k$ go to $\infty$ simultaneously.\vskip5pt

 Further the LSD's
 obtained in (a) and (b) coincide with those in (c) when $n$ or
 respectively $k$ tends to infinity.
 This limit in (c) is the
 semicircle law in Case (i). In particular this establishes results of
 \cite{block} under weaker moment conditions
 when the blocks are i.i.d. In Case (ii) the limit is related to
the limit of the random symmetric Toepiltz matrix as obtained by
 \cite{bry} and \cite{hammond:miller}. Our study shows that one can also accommodate other kinds of
patterned matrices in place of the i.i.d.\ or Toeplitz matrices
$A_i$.

\cite{block} also studied the Hankel block structure where the
individual blocks are i.i.d.\ matrices and the symmetry of the whole
matrix was assumed. It would be interesting to  study Hankel block
structures where individual blocks are either Toeplitz or Hankel.
 Note that the Hankel matrix is a symmetric matrix %the theory of
and hence the  block structure is symmetric. This is an additional
feature which is absent in the two block matrices that we deal with
in this article. We shall deal with such symmetric block matrices
separately in~\cite{rbsh:2011}.
% where the block structure is not necessarily symmetric.
This symmetry will be exploited to establish additional results and
for the more general situation
%indeed we will deal with the situation
where there is a general patterned block structure and
where each individual block is another patterned matrix.
 %? We provide an answer to this in the forthcoming article~\cite{rbsh:2011}.

%In this article to keep things simple and more transparent we restrict ourselves to the above two matrices and the asymmetric Toeplitz block structure.
Incidentally, block matrices with asymmetric Toeplitz block
structures are specific block versions of  patterned matrices. The
problem of showing existence of LSD of random patterned matrices
using the moment method was studied in the general framework by
\cite{bosesen} who built upon the so called volume method ideas that
were first propounded by  \cite{bry} in the context of Toeplitz and
Hankel matrices. Our proofs are  based on these ideas and results
%of \cite{bosesen}
and we  also draw generously from the developments in
\cite{bana:bose}.
 \vskip5pt

In Section \ref{section:prelim} we recollect a few useful concepts
from \cite{bosesen} and introduce our model. In Section
\ref{section:results} we state and prove our results.
%definitions from \cite{bosesen}.

%
%In a recent paper, Li, Liu and Wang \cite{block} considered
%asymmetric block structures like  Asymmetric Toeplitz and Asymmetric
%Hankel Blocks structures with i.i.d blocks in such a way that the
%matrix remains symmetric. Using moment method they showed the
%existence of LSD by explicit calculations involving the eigenvalue
%trace formula.

%In this paper,  we provide an alternative proof of some results in
%Li,Liu and Wang \cite{block}. In particular, we consider Asymmetric
%Toeplitz Block Structure with i.i.d. blocks and prove existence of
%LSD in each of the following three cases: (i) the size of blocks
%increase to infinity, (ii) the size of the structure increase to
%infinity and (iii) both increase to infinity. We then consider
%assymmetric block structures with non-Wigner or non i.i.d. blocks to
%prove similar results for Toeplitz block structure with Toeplitz
%blocks.

\section{Preliminaries}\label{section:prelim}
%\subsection{Standard Preliminaries}
%{\bf Link Function:}
\subsection{Background material}
We first present in brief some of the material from \cite{bosesen}
that we need.
%ed for our purposes.
A patterned matrix is defined through a \emph{link function}. Let
$d$ be a positive integer. Let $L_n$ be functions
$L_n:\{1,2,..,n\}^2 \rightarrow Z_{\geq}^d, ~n \geq 1.$ Here
$Z_{\geq}^d$ denotes all $d$-tuples of non-negative integers ($d=1$
or $2$). We write $L$ for $L_n$ and write $\mathbb{N}^2$ as the
common domain of $\{ L_n \}$.

Then the  sequence of patterned matrices $\{A_n\}$ of order $n
\times n$ with link function $L$ is defined as
%under consideration will always be such that the $(i,j)$-th entry of $A_n$ is
%of the
%form
$$A_n\equiv((a_{i,j}))=((x_{L_n(i,j)}))$$
where $\{x_{i,j}\}$ or  $\{x_{i}\}$ as the case may be, is called
the \textit{input sequence}.  $L_W$ and $L_T$ will denote the link
functions respectively for the Wigner and symmetric Toeplitz
matrices, so that
$$L_W(i,j)= (\min (i,j),\max (i,j)), \ \ L_T (i,j)=|i-j|.$$

%{\bf The trace formula}. Let $A_n=((a_{i,j})).$ Then
% Note that
 The $h$-th moment of the {\it
empirical spectral distribution (ESD)} of $n^{-1/2}A_n$ equals
%is given by
\begin{equation*}\label{eq:tracepi}
\frac{1}{n}\sum_i
(\frac{\lambda_i}{\sqrt{n}})^h=\frac{1}{n}\Tr\left(\frac{A_n}{\sqrt n}\right)^h =
\frac{1}{n^{1+h/2}}\sum_{1 \le i_1,i_2,\ldots,i_h \le n} x_{L(i_1,
i_2)} x_{L(i_2, i_3)} \cdots x_{L(i_{h-1}, i_h)} x_{L(i_h, i_1)}.
\end{equation*}
% {\bf Circuit:}
Any function $\pi:\{0,1,2,\cdots,h\} \rightarrow \{1,2,\cdots,n\}$ \
is
%said to be
a {\it circuit of length h} if $\pi(0) = \pi(h).$
%The \textit{length} $l(\pi)$ of $\pi$  equals  $h$.
%We shall write $\pi_i$ for $\pi(i)$ sometimes.\\
A circuit depends on  $h$ and $n$ but we suppress this dependence. A
circuit $\pi$ is said to be \emph{matched}).
%For any such $\pi$,
if given any $i$, there is at least one $j\ne i$ such that
$L(\pi(i-1),\pi(i)) = L(\pi(j-1),\pi(j)).$\vskip5pt

%{\bf Matched Circuits:}
%Any $\pi$
%is said to have
%has an \textit{edge} of order $ e \ (1 \le e \le h)$ if it has  an
%$L$-value, $L(\pi(i-1),\pi(i))$ which is
%is said to be an of $\pi$ and
%repeated exactly $e$ times.
%Circuits that have
%If there is at least one $e \geq 2$ then
%are called

 %To deal with (M2) or (M4), we need multiple circuits. The following
%notions will be useful:  $k$ circuits $\pi_1, \pi_2, \cdots, \pi_k$
%are said to be \emph{jointly matched} if each $L$-value  occurs at
%least twice across all circuits. They are said to be
%\emph{cross-matched} if each circuit has at least one $L$-value
%which occurs in at least one of the other circuits.\vskip5pt

%{\bf Equivalence relation on circuits:}
Two circuits $\pi_1$ and $\pi_2$ (of same length) are
\textit{equivalent} if
%their $L$ values agree at exactly the same
%pairs $(i,j)$.  That is,
$\big\{L(\pi_1(i-1), \pi_1(i)) =
L(\pi_1(j-1), \pi_1(j))$ $\Leftrightarrow L(\pi_2(i-1), \pi_2(i))
= L(\pi_2(j-1), \pi(j))\big\}. $ This defines an equivalence
relation.
%{\bf Words:}
\textit{Equivalence classes} are identified with partitions of
$\{1,2,\cdots,h\}$. Partitions will be labelled by  \textit{words}
$w$ of length $l(w)=h$ of letters where the first occurrence of each
letter  is in alphabetical order. For example, if $h = 5$, then the
partition $\{ \{1,3,5\}, \{2,4\}\} $ is represented by the word
$ababa$. %
 %\textbf{The class  $\Pi(w)$:}
 Let $w[i]$ denote the $i$-th entry of
$w$.  The equivalence class corresponding to $w$ will be denoted by
$$ \Pi_{L,n}(w) = \{\pi: w[i] = w[j] \Leftrightarrow L(\pi(i-1),\pi(i)) =
L(\pi(j-1),\pi(j))\}.$$
%If $\pi \in \Pi(w)$, then $$|w|\equiv
%#\{L(\pi(i-1),\pi(i)): 1 \leq i \leq h \}$$ is the number of
%partition blocks corresponding to $w$. \vskip5pt

% The notion of order $e$ edges, matching, non-matching  for $\pi$
%carries over to words in a natural manner. For instance, $ababa$ is
%matched.  The word  $abcadbaa$ is non-matched, has edges of order
%$1$, $2$ and $4$ and the corresponding partition is $\{\{1, 4, 7,
%8\}, \{2, 6\}, \{3\}, \{5\}\}.$ As pointed out, it will be enough to
%consider only  matched words.\vskip5pt

A word is \textit{pair-matched} if every letter occurs exactly two
times. The set of pair-matched words of length $k$ will be denoted
by $\bw_{k}(2)$. For any set $G$, let $\# G$ denote the number of
elements in $G$.
%\ = \ \{w: w \ \text{is pair-matched of length}\ k\}.$$

%%The total number of pair-matched words of length $(2k)$ equals
Note that $\#\bw_{k}(2)=\frac{(2k)!}{2^k k!}$.
%A pair-matched word is
A word $w \in \bw_{k}(2)$ is \textit{Catalan} if it has a double letter and removing double
letters successively leads to the empty word. \vskip5pt

% \textbf{The class  $\Pi^*(w)$:}
 Define for any (matched) word $w$,
$$\Pi_{L,n}^*(w) = \{\pi: w[i] = w[j] \Rightarrow L(\pi(i-1),\pi(i)) =
L(\pi(j-1), \pi(j)) \}\  \supseteqq \Pi_{L, n}(w).$$
%It is shown in \cite{bosesen} that
$\Pi_{L, n}^*(w)$  is often equivalent to $\Pi_{L, n}(w)$ for
asymptotic considerations, but is easier to work with.\vskip5pt

% \textbf{Vertex and generating vertex:}
Any $\pi(i)$ is %called
a \textit{vertex} and it is
%. This vertex is said to be
\emph{generating} if either $i=0$ or $w[i]$ is the
position of the \emph{first} occurrence of  a letter. For example,
if $w = abbcab$ then $\pi(0),\pi(1),\pi(2),\pi(4)$ are generating
vertices.\vskip5pt

%Note that the $L$ function  for all of the above cases satisfy the
%following  property. This property will be crucially used in the
%proof of the LSD of different patterned matrices.\vskip5pt
%
%It is easy to see that if $w$ is a matched word and $L$ satisfies
%property B, then $$|\Pi^*(w)|=O(n^{|w|+1}).$$ Proof of the following
%two lemmas can be found in  Bryc, Dembo and Jiang\cite{bry} and also
%in Bose and Sen \cite{bosesen}. \vskip5pt
%It was shown by  that under $Property~B$, to show the existence of
%LSD, it suffices to prove the condition $(M1)$ only as Carlemann's
%condition and (M4) are automatically satisfied. In particular they
%showed that,

For any word of length $2t$, let (whenever the limit exists),
$$p(w)=\lim_{n\rightarrow \infty}\frac{\#\Pi_{L, n}^*(w)}{n^{1+t}}.$$
It is known that with appropriate input sequences, $p(w)$ exists for
many symmetric patterned matrices, including the Wigner and the
symmetric Toeplitz matrices and $\displaystyle{\sum_{w\in \bw_k(2)}
p(w)}$ is the $2k$-th moment of the LSD. We shall denote by $p_W(w)$
and $p_T(w)$ the value of $p(w)$ for these two matrices
respectively.

\subsection{Our model}
A \textit{finite block structure} $B_k=B_k(a_0,a_1,...)$ is a
$k\times k$ patterned matrix with link function $L_1$ and input
sequence $\{a_i\}_{i\in \mathbb{Z}}$. $B_k$ is called symmetric if
$L_1(i,j)=L_1(j,i)$. Let $\{A_{n,0}, A_{n,1},...\}_{n\times n}$ be a
sequence of independent patterned matrices with link functions
$L_{2,0}, L_{2,1},...$ and respective independent input sequences
$\{x_{0,k},x_{1,k},...\}_{k\geq 1}$. Then the $(i,j)-th$ entry of
$A_{n,k}$ is given by $x_{k,L_{2,k}(i,j)}$. The $kn\times kn$ block
matrix
%We define a matrix
$B_k \star A_{n , i}$ is
%, to be called ``Block matrix with Block
%Structure $B$ with $A_i$ blocks", as follows. We
defined by replacing $(i,j)-th$ entry of $B_k$ by $A_{n,L_1(i,j)}$,
i.e. $B_k \star A_{n,i}=B_k(A_{n,0},A_{n,1},...).$ Let
$$m(i,j)=L_1\left(\left[\frac{i-1}{n}\right]+1 ,\left[\frac{j-1}{n}\right]+1\right).$$ Thus  $B_k \star A_{n , i}$ is a
$nk \times nk$ matrix with link function $L$ defined as
%of $B_k \star A_{n , i}$ is then
$$L(i,j)= (m(i,j), L_{2,m(i,j)}\left((i-1)\mod ~n +1, (j-1)\mod~ n
+1)\right).$$ Hence all the concepts introduced in the previous
subsection remain valid. In this article we shall deal with the
following two sequences of block matrices. It may be noted that all of these
are symmetric matrices, even though most of the blocks are asymmetric.\vskip5pt

\textbf{Toeplitz block matrix with i.i.d. blocks}
 ($TBI_{k,n}$): Here  $L_1(i,j) = i-j$ is the link function for the block structure $B_k$,  $A_{0,n}$ is a Wigner matrix
with link function $L_W$ and  for $i>  0$, $A_{n , i}$ are  i.i.d.
matrices  with the common  link function $L_2(k,l)=(k,l)$ for every
$k,l$. For $i<0$, $A_{n , i}=A_{n, -i}^T$. We denote this block
matrix by $TBI_{k,n}$.\vskip5pt

%\noindent
\textbf{Toeplitz block matrix with Toeplitz blocks}
($TBT_{k,n}$):
 Here $B_k$ is as above, $A_{0,n}$ is a symmetric Toeplitz
matrix with  $L_{2,0}(k,l)=L_T (k,l)$ and for $i>0$, $A_{n , i}$ is
an asymmetric Toeplitz matrix with link function $L_2(k,l)=k-l$. For
$i<0$, $A_{n , i}=A_{n, -i}^T$. We denote this block matrix by
$TBT_{k,n}$.\vskip5pt

We shall make the following assumption on the input sequence:
\vskip5pt

 \textit{Assumption A}: The input sequence is independent
with mean 0 and variance 1 and either uniformly bounded or
identically distributed or with uniformly bounded moments of all
order. \vskip5pt

\subsection{Address functions}
%Before we state our main theorem we need
The following definitions are taken from  \cite{bana:bose} and
modified for our set-up.\vskip5pt

{\bf Block Address  Function:}
%??? Definition as of now only for Wigner blocks???, need to extend
%definition??  Let $B_k$  be an asymmetric Toeplitz  block structure.
Let us consider any of the
above two  block matrices.
%x $B_k\star A_{n, i}$. Hence, here
%$L_1(i,j))=i-j$, $L_{2,0}(i,j)=L_W(i,j)$, $L_{2,m}(i,j)=(i,j)$ for
%all $m>0$ and $L_{2,m}(i,j)=(j,i)$ for all $m<0$.
 Let $U_i =\{(i-1)n+1,...,in\},i=1,2,...,k$, be a
partition of ${1,2, \ldots, nk}$. %Let

For any $w\in \bw_{2t}(2)$,
% be a word of length $2t$. The
the {\bf block address} $\pi_b$ of $\pi \in \Pi_{L, nk}^*(w)$ is
defined as:
$$\pi_b(i)=j~~\text{if}~~\pi(i)\in U_j.$$ It denotes which of the $k$ blocks of rows
(or columns) an index belongs to. Note that from the definition of
the block structure, if two elements of $B_k\star A_{n, i}$ are
same, then they must belong to blocks which are either same or
transpose of each other. Hence,
\begin{eqnarray}
%L(\pi(i-1),\pi(i))= L(\pi(j-1),\pi(j))
w[i]=w[j]&\Rightarrow &
L_1(\pi_b(i-1),\pi_b(i))= \pm L_1(\pi_b(j-1),\pi_b(j))\\
%.$$ It follows that, $$
%L(\pi(i-1),\pi(i))
&
%L(\pi(j-1),\pi(j))
\Rightarrow & L_T(\pi_b(i-1),\pi_b(i))= L_T(\pi_b(j-1),\pi_b(j)).
\end{eqnarray} Also $\pi(0)=\pi(2t)\Rightarrow \pi_b(0)=\pi_b(2t)$.
Hence, $\pi_b\in \Pi_{L_T, k}^*(w)$.
% , where $\Pi_{L_T, k}^*(w)$ is the class $\Pi_{L, nk}^*$ defined for a symmetric Toeplitz matrix with link function $L_T$ of size $k\times k$.

This leads to the definition of the {\bf block address function} as:
$\phi_B : \Pi_{L, nk}^*(w)\rightarrow \Pi_{L_T, k}^*(w)$ by:
$$ (\pi(0),....,\pi(2t))\mapsto (\pi_b(0),....,\pi_b(2t)).$$

\textbf{Entry Address function}:  Analogous to the block address we
define entry address as follows. Let $w \in \bw_{2t}(2)$.
%$w$ be a pair-matched word of length $2t$.
The {\bf entry
address} $\pi_e$ of $\pi \in \Pi_{L, nk}^*(w)$ is defined as
$$\pi_e(i)=\pi(i)-(\pi_b(i)-1)n.$$ Clearly then, $1\leq \pi_e(i)\leq
n$. This denotes the address of an entry inside a block. Now, from
the definitions it follows that
$$
%L(\pi(i-1),\pi(i))= L(\pi(j-1),\pi(j))
w[i]=w[j]\Rightarrow L_{2,m_1}(\pi_e(i-1),\pi_e(i))=
L_{2,m_2}(\pi_e(j-1),\pi_e(j))$$ where
$$m_1=L_1(\pi_b(i-1),\pi_b(i))\ \ \text{and}\ \ m_2=L_1(\pi_b(j-1),\pi_b
(j)).$$ Conversely,  $$L_{2,m_1}(\pi_e(i-1),\pi_e(i))=
L_{2,m_2}(\pi_e(j-1),\pi_e(j)) \Rightarrow m_1=\pm m_2$$ and
hence\vskip5pt

(i)  $(\pi_e(i-1),\pi_e(i))=(\pi_e(j-1),\pi_e(j))$ or
$(\pi_e(i-1),\pi_e(i))=(\pi_e(j),\pi_e(j-1))$ (when we have iid
blocks)\vskip5pt

(ii) $\pi_e(i)-\pi_e(i-1)=\pm (\pi_e(j)-\pi_e(j-1))$ (when we have
asymmetric Toeplitz blocks). \vskip5pt
%or $(\pi_e(i-1),\pi_e(i))=(\pi_e(j),\pi_e(j-1))$.

Also, as before $\pi_e(0)=\pi_e(2t)$ and hence $\pi_e\in \Pi_{L_W,
n}^*(w)$ (when we have iid
blocks) and $\pi_e\in \Pi_{L_T,
n}^*(w)$ (when we have
asymmetric Toeplitz blocks).
% where $\Pi_{L_W, n}^*(w)$ is the class $\Pi_{L, nk}^*$ defined for a
%Wigner matrix of size $n\times n$ with link function $L_W$.
%(i,j)=(\min(i,j), \max(i,j))$

This leads to the definition of the {\bf entry address function}
$\phi_A : \Pi_{L, nk}^*(w)\rightarrow \Pi_{L_W, n}^*(w)$ (when we have iid
blocks) or $\phi_A : \Pi_{L, nk}^*(w)\rightarrow \Pi_{L_T, n}^*(w)$ (when we have asymmetric Toeplitz
blocks) as:
$$ (\pi(0),....,\pi(2t))\mapsto (\pi_e(0),....,\pi_e(2t)).$$
Note that $(\pi_b,\pi_e)$ determines $\pi$ uniquely but for any
$\pi_b\in \Pi_{L_T, k}^*(w)$ and $\pi_e\in \Pi_{L_W, n}^*(w)$, the
$\pi$ determined by $\pi_b$ and $\pi_e$ need not be in $\Pi_{L,
nk}^*(w)$.

Now let $w \in \bw_{2t}(2)$.
%$w$ be a pair-matched word of length $2t$.
Let $S$ be the
set of all non-zero generating vertices of $w$. For every $i \in S$,
let us denote by $j_i$ the index such that $w[j_i]$ is the second
occurrence of the letter $w[i]$. Let $i_1,i_2,...,i_t$ be all the
non-zero generating vertices. Let $l=(l_{i_1},...,l_{i_t}) \in
\{-1,1\}^{t}$.\vskip5pt

Let $\Pi_{L_T, k, l}^*(w)$ be the subset of $\Pi_{L_T,k}^*(w)$ such
that,
$$\pi_e(i-1)-\pi_e(i)=l_i(\pi_e(j_i-1)-\pi_e(j_i))~~\forall i \in S.$$
Now clearly, $$\Pi_{L_T, k}^*(w)=\bigcup_{l} \Pi_{L_T, k,l}^*(w)$$
%It is to be noted that this
which is {\it not} a disjoint union.

Also, define  $\Pi_{L_W, n,l}^*(w)=\{\pi_e \in \Pi_{L_W, n}^*(w):
w[i]=w[j]\Rightarrow (\pi_e(i-1),\pi_e(i))=(\pi_e(j-1),\pi_e(j))
\quad \text{if} ~l_i=1 \quad\text{and }
\pi_e(i-1),\pi_e(i))=(\pi_e(j),\pi_e(j-1))\quad\text{if} ~l_i=-1
\} $

Clearly,
$$\Pi_{L_W, n}^*(w)=\bigcup_{l} \Pi_{L_W, n,l}^*(w).$$
It is to be noted that $\Pi_{L_W, n,l}^*(w)$'s are closely related
with $R_1$ and $R_2$ constraints as described in \cite{bosesen}.
In fact, it is easy to see that the pair $(i,j_i)$ satisfies an
$R_1$ constraint if $l_i=1$ and it satisfies an $R_2$ constraint
if $l_i=-1$.
\section{Results and proofs}\label{section:results}

\subsection{Results}

We are now ready to state the main results.  \\
%We need the following assumption.\\

\begin{theorem}\label{theorem:wignerblock}
Consider the block matrix $TBI_{k,n}$ where $A_{n,i}$ satisfy
Assumption A.
% Let $B_k$ be an asymmetric Toeplitz block structure and let $A_{n,i}$ satisfy
%Assumption B.
Then,\vskip5pt

\noindent (i) for fixed $k$, as $n\rightarrow \infty$ LSD of
$\frac{1}{\sqrt{nk}}TBI_{k,n}$ exists w.p. 1, and has all moments
finite, all odd moments 0, and $\forall t>0$, the $2t-th$ moment
$\beta_{2t}$ satisfy
$$\beta_{2t}=\sum_{w \in \bw_{2t}(2),\  w~{\rm catalan}}
%:|w|=t}
\frac{\#\Pi_{L_T, k,l_0}^*(w)}{k^{t+1}}\ \ \text{where}\ \
l_0=(-1,-1,...,-1).$$ \noindent (ii) for fixed $n$, as $k\rightarrow
\infty$, LSD of $\frac{1}{\sqrt{nk}}TBI_{k,n}$ exists w.p. 1, and
has all moments finite, all odd moments 0, and $2t-th$ moments
$\beta_{2t}$ satisfy
$$\beta_{2t}=\sum_{w \in \bw_{2t}(2)}
\frac{\#\Pi_{L_W, n,l_0}^*(w)}{n^{t+1}} p_T(w)\ \ \text{where}\ \
l_0=(-1,-1,...,-1).$$
%and $p_T(w)$ is the $p(w)$ for symmetric Toeplitz matrix.\vskip5pt

\noindent (iii) as $n$ and $k$ both go to $\infty$, LSD of
$\frac{1}{\sqrt{nk}}TBI_{k,n}$ exists w.p. 1 and is the
semicircular law.\vskip5pt

\noindent
(iv) in (i) and (ii) if we let $k\rightarrow \infty$ or
$n \rightarrow \infty$ respectively, the LSD converge to that in
(iii).
\end{theorem}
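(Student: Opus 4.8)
The plan is to follow the moment method throughout, exploiting the factorization $\pi \leftrightarrow (\pi_b,\pi_e)$ via the block- and entry-address functions. Write $M_{k,n}=\frac{1}{\sqrt{nk}}TBI_{k,n}$. Since $TBI_{k,n}$ has link function $L(i,j)=(m(i,j),(i-1)\bmod n+1,(j-1)\bmod n+1)$, the eigenvalue trace formula gives
\[
\frac{1}{nk}\E\Tr M_{k,n}^{2t}
=\frac{1}{(nk)^{t+1}}\sum_{\pi \text{ circuit}}\E\!\left[\prod_{i=1}^{2t}x_{L(\pi(i-1),\pi(i))}\right].
\]
First I would carry out the standard reductions from \cite{bosesen}, \cite{bana:bose}: odd moments vanish in the limit; only pair-matched words $w\in\bw_{2t}(2)$ contribute asymptotically (using Assumption A to control unmatched and higher-matched terms, and to pass from $\E\Tr$ to almost-sure convergence via a fourth-moment/Borel--Cantelli argument); and $\Pi_{L,nk}(w)$ may be replaced by $\Pi^*_{L,nk}(w)$ with negligible error. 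This reduces each part to computing
\[
\lim \frac{\#\Pi^*_{L,nk}(w)}{(nk)^{t+1}}
\]
for pair-matched $w$, in the relevant asymptotic regime, and summing over $w$.

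The core computation is the counting of $\#\Pi^*_{L,nk}(w)$. Here I would use the key structural fact already set up in the Preliminaries: a circuit $\pi\in\Pi^*_{L,nk}(w)$ is equivalent to the data of a block-circuit $\pi_b$ and an entry-circuit $\pi_e$ satisfying \emph{compatible} constraints, and for a matched word the constraint coupling them is that when $w[i]=w[j]$ with $m_1=L_1(\pi_b(i-1),\pi_b(i))$, $m_2=L_1(\pi_b(j-1),\pi_b(j))$, one has $m_1=\pm m_2$, and on the entry side either an $R_1$ constraint (if $l_i=1$, forcing $m_1=m_2$) or an $R_2$ constraint (if $l_i=-1$, forcing $m_1=-m_2$) — this is the $l\in\{-1,1\}^t$ decomposition $\Pi^*_{L_T,k}(w)=\bigcup_l\Pi^*_{L_T,k,l}(w)$ and $\Pi^*_{L_W,n}(w)=\bigcup_l\Pi^*_{L_W,n,l}(w)$. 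Fixing the sign-vector $l$ makes $\pi_b$ and $\pi_e$ vary \emph{independently}, so
\[
\#\{\pi : \pi_b\in\Pi^*_{L_T,k,l}(w),\ \pi_e\in\Pi^*_{L_W,n,l}(w)\}
=\#\Pi^*_{L_T,k,l}(w)\cdot\#\Pi^*_{L_W,n,l}(w),
\]
and an inclusion--exclusion over $l$ (with overlaps of strictly smaller order) reduces $\#\Pi^*_{L,nk}(w)$ to $\sum_l \#\Pi^*_{L_T,k,l}(w)\,\#\Pi^*_{L_W,n,l}(w)$ up to lower-order terms. For part (i), divide by $(nk)^{t+1}=n^{t+1}k^{t+1}$ and let $n\to\infty$ with $k$ fixed: $\#\Pi^*_{L_W,n,l}(w)/n^{t+1}\to p_W(w;l)$, which is $1$ exactly when $l=l_0=(-1,\dots,-1)$ \emph{and} $w$ is Catalan (the $R_2$/pair-partition structure of the Wigner matrix forces this — this is the classical fact that Wigner pair-partition counts pick out non-crossing pairings with the ``opposite orientation'' reading), and $0$ otherwise, giving $\beta_{2t}=\sum_{w\text{ catalan}}\#\Pi^*_{L_T,k,l_0}(w)/k^{t+1}$. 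For part (ii), divide similarly and let $k\to\infty$ with $n$ fixed: $\#\Pi^*_{L_T,k,l}(w)/k^{t+1}\to p_T(w)$ when $l=l_0$ (and the Toeplitz volume is nonzero only in that orientation, again from \cite{bry},\cite{bosesen}), $0$ otherwise, leaving $\beta_{2t}=\sum_w \#\Pi^*_{L_W,n,l_0}(w)/n^{t+1}\,p_T(w)$. For part (iii), let $n,k\to\infty$ together: only $l=l_0$ survives, $\#\Pi^*_{L_W,n,l_0}(w)/n^{t+1}\to \mathbf 1\{w\text{ catalan}\}$ and $\#\Pi^*_{L_T,k,l_0}(w)/k^{t+1}\to \mathbf 1\{w\text{ catalan}\}$ simultaneously, so $\beta_{2t}=\#\{w\in\bw_{2t}(2):w\text{ catalan}\}=C_t$, the $t$-th Catalan number, identifying the LSD as the semicircle law. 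Part (iv) is then immediate: in the formula of (i), $\#\Pi^*_{L_T,k,l_0}(w)/k^{t+1}\to\mathbf 1\{w\text{ catalan}\}$ as $k\to\infty$ so $\beta_{2t}\to C_t$; in (ii), $\#\Pi^*_{L_W,n,l_0}(w)/n^{t+1}\to\mathbf 1\{w\text{ catalan}\}$ and $p_T(w)\to\mathbf 1\{w\text{ catalan}\}$ (from \cite{hammond:miller},\cite{bry}) as $n\to\infty$, so again $\beta_{2t}\to C_t$; by the method of moments and determinacy of the semicircle law (compactly supported / Carleman), convergence of moments gives weak convergence.

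The main obstacle I anticipate is the bookkeeping in the two-sided decomposition: proving rigorously that fixing $l$ decouples $\pi_b$ from $\pi_e$, that the union over $l$ can be resolved by inclusion--exclusion with error $o((nk)^{t+1})$, and — most delicately — pinning down \emph{which} $(w,l)$ pairs give a positive-order contribution to $\#\Pi^*_{L_W,n,l}(w)$ and to $\#\Pi^*_{L_T,k,l}(w)$. The assertion that only $l_0$ survives and that on the Wigner side this additionally forces $w$ Catalan is exactly the point where one must invoke the $R_1/R_2$-constraint analysis of \cite{bosesen}: an $R_1$ constraint between $(i,j_i)$ generically drops the number of free generating vertices by one, costing a factor $n^{-1}$ (resp. $k^{-1}$), whereas for the Wigner link function an $R_2$ constraint is ``free'' only in the Catalan/non-crossing case. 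Handling Assumption A uniformly across its three sub-cases (bounded, i.i.d., bounded moments) for the truncation and concentration steps is routine but must be done with care, as in \cite{bana:bose}.
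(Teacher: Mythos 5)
Your proposal is correct and follows essentially the same route as the paper: the $(\pi_b,\pi_e)$ decomposition indexed by the sign vector $l$, with the decoupling of block and entry addresses once $l$ is fixed, and the limits supplied by the $R_1/R_2$ analysis of \cite{bosesen} and \cite{bry}; the paper merely organizes the count by fibering over $\pi_b$ (resp.\ $\pi_e$) via Lemma~\ref{lemma:wigner} instead of an inclusion--exclusion over $l$, and handles (iii) by a two-sided sandwich bound. One small slip worth fixing: in your (iii) and (iv) the Toeplitz factor $\#\Pi^*_{L_T,k,l_0}(w)/k^{t+1}$ tends to $p_T(w)$, which is \emph{positive} for non-Catalan $w$ (and $p_T(w)$ does not depend on $n$) --- it is the Wigner factor alone that annihilates non-Catalan words, and since $p_T(w)=1$ for Catalan $w$ your conclusion $\beta_{2t}=C_t$ still stands.
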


\begin{theorem}\label{theorem:toeplitzblock}
Consider the block matrix $TBT_{k,n}$ where the input sequence
satisfies Assumption B.
%Let $B_k$ be an asymmetric Toeplitz block structure and let $A_{n,i}$ satisfy
%Assumption C.
Then,\vskip5pt

\noindent (i) for fixed $k$, as $n\rightarrow \infty$ LSD of
$\frac{1}{\sqrt{nk}}TBT_{k,n}$ exists w.p. 1, has all moments
finite, all odd moments 0, and $(2t)-th$  moments $\beta_{2t}$
satisfy
$$\beta_{2t}=\sum_{w \in \bw_{2t}(2)}
%w~{\rm pair-matched}:|w|=t}
\frac{\#\Pi_{L_T, k,l_0}^*(w)}{k^{t+1}} p_T(w)\ \ {\rm where}\quad
l_0=(-1,-1,...,-1).$$ \noindent (ii) for fixed $n$, as $k\rightarrow
\infty$, LSD of $\frac{1}{\sqrt{nk}}TBT_{k,n}$ exists w.p. 1, has all
moments finite, all odd moments 0, and $(2t)-th$  moments
$\beta_{2t}$ satisfy
$$\beta_{2t}=\sum_{w \in
\bw_{2t}(2)}
%w~{\rm pair-matched}:|w|=t}
\frac{\# \Pi_{L_T, n,l_0}^*(w)}{n^{t+1}} p_T(w)\ \ {\rm where}\quad
l_0=(-1,-1,...,-1)$$
%and $p_T(w)$ is the $p(w)$ for symmetric Toeplitz matrix.
\vskip5pt

\noindent (iii) as $n$ and $k$ both go to $\infty$, LSD of
$\frac{1}{\sqrt{nk}}TBT_{k,n}$ exists w.p. 1, is symmetric, has
all moments finite and is determined by the even moments
$$\beta_{2t}= \sum_{w \in
\bw_{2t}(2)}
%w~{\rm pair-matched}:|w|=t}
(p_T(w))^2.$$
\noindent (iv) in (i) and (ii) if we let $k\rightarrow \infty$ or
$n \rightarrow \infty$ respectively, the LSD converge to that in
(iii).
\end{theorem}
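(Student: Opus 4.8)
I would argue by the moment method, following \cite{bosesen} and \cite{bana:bose}. The routine preliminary reductions are: (a) a truncation of the entries (using the moment assumption on the input sequence) so that one may assume it bounded; (b) an almost sure statement deduced from a bound $\var\!\big(\frac{1}{nk}\Tr(\tfrac{1}{\sqrt{nk}}TBT_{k,n})^{h}\big)=O((nk)^{-2})$, proved by counting jointly matched pairs of circuits, together with Borel--Cantelli along any sequence with $nk\to\infty$; (c) the eigenvalue trace formula, which with $N=nk$ gives $\frac1N\E\Tr(\tfrac{1}{\sqrt N}TBT_{k,n})^{h}=\frac{1}{N^{1+h/2}}\sum_{\pi}\E\prod_{i}x_{L(\pi(i-1),\pi(i))}$, of which only matched circuits survive; circuits carrying a letter of multiplicity $\ge 3$ have fewer than $h/2+1$ distinct vertices and contribute $o(N^{1+h/2})$, so odd moments vanish and, for $h=2t$, $\beta_{2t}=\lim\sum_{w\in\bw_{2t}(2)}\#\Pi^{*}_{L,N}(w)/N^{1+t}$ (the passage from $\Pi$ to $\Pi^{*}$ being harmless at this order). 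Everything thus reduces to the asymptotics of $\#\Pi^{*}_{L,nk}(w)/(nk)^{1+t}$ for a fixed pair-matched $w$ of length $2t$, in the three regimes.

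For that I would use the block and entry address functions of Section~\ref{section:prelim}. A circuit $\pi$ is the same datum as the pair $(\pi_{b},\pi_{e})$ with $\pi_{b}\in\Pi^{*}_{L_T,k}(w)$ and $\pi_{e}\in\Pi^{*}_{L_T,n}(w)$, and (taking the composite link function in its canonical form, so that a negative block index $m$ is recorded by $|m|$ with the within-block indices transposed) such a pair arises from some $\pi\in\Pi^{*}_{L,nk}(w)$ precisely when there is a \emph{common} sign vector $l\in\{-1,1\}^{t}$ with $\pi_{b}\in\Pi^{*}_{L_T,k,l}(w)$ and $\pi_{e}\in\Pi^{*}_{L_T,n,l}(w)$ --- this is exactly the $R_1/R_2$ dichotomy recalled at the end of the preliminaries, the diagonal blocks $(m=0)$ being the only place where both signs are admissible. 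Hence $\Pi^{*}_{L,nk}(w)\cong\bigcup_{l}\Pi^{*}_{L_T,k,l}(w)\times\Pi^{*}_{L_T,n,l}(w)$; for $l\ne l'$ the intersection $\Pi^{*}_{L_T,m,l}(w)\cap\Pi^{*}_{L_T,m,l'}(w)$ is cut out by an extra ``$=0$'' relation and so has size $O(m^{t})$, whence by inclusion--exclusion $\#\Pi^{*}_{L,nk}(w)=\sum_{l}\#\Pi^{*}_{L_T,k,l}(w)\,\#\Pi^{*}_{L_T,n,l}(w)+o\big((nk)^{1+t}\big)$.

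The last ingredient is the known Toeplitz fact (\cite{bry}, \cite{bosesen}) that only the sign vector $l_{0}=(-1,\dots,-1)$ carries full order: writing $d_{i}=\pi(i)-\pi(i-1)$, the relations defining $\Pi^{*}_{L_T,m,l}(w)$ make each ``second'' increment equal to $\pm$ the corresponding ``first'' one, so the circuit constraint $\sum_{i}d_{i}=0$ holds automatically when $l=l_{0}$ but becomes a genuine extra linear relation --- reducing the count from order $m^{1+t}$ to $O(m^{t})$ --- as soon as some $l_{i}=+1$; thus $\#\Pi^{*}_{L_T,m,l}(w)/m^{1+t}$ tends to $p_{T}(w)$ when $l=l_{0}$ and to $0$ otherwise, and $\#\Pi^{*}_{L_T,m}(w)=\#\Pi^{*}_{L_T,m,l_{0}}(w)+O(m^{t})$. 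Substituting into the identity above: letting $n\to\infty$ with $k$ fixed kills every $l\ne l_{0}$ term on the $n$-side and gives $\beta_{2t}=\sum_{w}\frac{\#\Pi^{*}_{L_T,k,l_{0}}(w)}{k^{t+1}}p_{T}(w)$, i.e.\ (i); by the $n\leftrightarrow k$ symmetry of the construction (block pattern and blocks both Toeplitz) the same argument with the roles reversed gives (ii); letting both $n,k\to\infty$ kills the non-$l_{0}$ terms on both sides and leaves $\beta_{2t}=\sum_{w\in\bw_{2t}(2)}p_{T}(w)^{2}$, which is (iii) --- and since $p_{T}(w)\le1$ and $\#\bw_{2t}(2)=(2t-1)!!$, these even moments are bounded by those of the standard Gaussian, so Carleman's condition holds, they determine a unique law, and that law is symmetric because all odd moments vanish. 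Finally (iv): in the formulas of (i) and (ii), letting $k\to\infty$ (resp.\ $n\to\infty$) and using $\#\Pi^{*}_{L_T,k,l_{0}}(w)/k^{t+1}\to p_{T}(w)$ term by term over the finite set $\bw_{2t}(2)$ turns both into $\sum_{w}p_{T}(w)^{2}$, so by the method of moments the LSDs in (i) and (ii) converge to that in (iii).

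I expect the bulk of the work --- and the main obstacle --- to be the middle step: checking that the identification of $\Pi^{*}_{L,nk}(w)$ with $\bigcup_{l}\Pi^{*}_{L_T,k,l}(w)\times\Pi^{*}_{L_T,n,l}(w)$ is exact (this is where the precise form of the composite link function and the anomalous behaviour of the diagonal blocks $A_{0,n}$ must be handled), and that the inclusion--exclusion corrections together with all non-$l_{0}$ sign vectors are of strictly lower order uniformly enough to survive all three limiting regimes; the almost sure conclusion in (iii)--(iv) likewise requires the variance estimate to be run along arbitrary sequences $(n,k)$ with $nk\to\infty$.
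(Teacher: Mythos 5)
Your proposal is correct and follows essentially the same route as the paper: reduction to counting $\#\Pi^{*}_{L,nk}(w)$ for pair-matched $w$, identification of a circuit with a block-address/entry-address pair sharing a common sign vector $l$, and the fact (Lemma~\ref{result2}) that only $l_0=(-1,\dots,-1)$ contributes at full order. The only difference is bookkeeping: the paper counts fibre-by-fibre over $\pi_b$ (resp.\ $\pi_e$) via Lemma~\ref{lemma:toeplitz} and uses a sandwich bound for part (iii), whereas you phrase the same decomposition as an inclusion--exclusion over sign vectors; both yield identical limits.
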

%\begin{remark}
%Incidentally, the LSD in (iii) of the above Theorem is same as the
%LSD of a block matrix with symmetric Toeplitz block structure with
%symmetric Toeplitz blocks as seen in ???. We shall deal with matrices
%with symmetric blocks in a separate article. refer to project
%report??whose project report has this??.
%\end{remark}

\subsection{Proofs}
%Before we start with the proofs
We first state a Proposition which follows from the results of
\cite{bosesen}
%which holds  for patterned random matrices and
and which
%these
helps to reduce many computational aspects. \\
%We need one Assumption: \\

A link function is said to satisfy \textit{Property B},
%introduced by \cite{bosesen},
if,
\begin{equation}
\Delta(L) = \sup_n \sup_{ t } \sup_{1 \leq k \leq n} \# \{ l: 1 \leq
l \leq n, \ L(k,l) =t\} < \infty.
\end{equation}
\noindent \textit{Assumption B}: Let $k_n=\#\{L_n(i,j):1\leq i,j
\leq n\}$ and $\alpha_n=\max_k\#\{(i,j): L_n(i,j)=k\}$. Then $k_n
\rightarrow \infty$ and $ k_n \alpha_n =O(n^2)$.\vskip5pt

%The following
%Proposition
%\vskip5pt

\begin{proposition}\label{prop:review} Suppose $A_n$ is a sequence of $n \times n$ patterned random
matrices with link function $L$ satisfying Assumption B and
\textit{Property B}. \vskip5pt
\begin{enumerate}
\item \label{item1} Suppose for every bounded, mean
zero and variance one i.i.d. input sequence, the LSD exists
%empirical spectral distribution
% converges  to some fixed distribution
almost surely and is non-random.
 Then the same limit continues to hold if the input
sequence satisfies Assumption A.

\item \label{item2} if $w$ is matched but not pair-matched then $p(w)$
exists and is equal to 0. \vskip5pt

\item \label{item3} if for every $t>0$ and for every
%pair-matched word $w$ of length $2t$,
$ w\in \bw_{2t}(2)$, $p(w)=\lim_{n\rightarrow \infty}\frac{\#\Pi_{L,
n}^*(w)}{n^{1+t}}$ exists then the LSD of $\frac{A_n}{\sqrt{n}}$
exists and
%Further
the $2t$-th moment of the LSD is given by
$$\beta_{2t}=\sum_{w \in \bw_{2t}(2)}
%w~{\rm pair-matched}: |w|=t}
p(w).$$
\item\label{item4} Assumption B and \textit{Property B} are satisfied for the
Toeplitz and the Wigner link functions. Both for the Toeplitz matrix
and the Wigner matrix, $p(w)$ exists for every $t>0$ and every $
w\in \bw_{2t}(2)$. For the Wigner matrix $p_W(w)=0$ iff $w$ is
non-Catalan. For every Catalan word $w$,  $p_W(w)=p_T(w)=1$.
\end{enumerate}
\end{proposition}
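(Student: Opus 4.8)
The plan is to treat this Proposition as a bookkeeping step that collects standard facts from \cite{bosesen} (and, for the Toeplitz moments, \cite{bry}); so for each of the four assertions I would name the mechanism being invoked and check that its hypotheses hold in our setting, rather than reprove the machinery.

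For assertion \ref{item1} the plan is the truncation-and-centering argument of \cite{bosesen}. Given an input sequence satisfying Assumption A, truncate each entry at a large constant $C$ and recenter; this yields a bounded, mean-zero sequence with variances tending to $1$, and Property B together with Assumption B, via the rank inequality and a Hoffman--Wielandt/Bai-type bound, shows that replacing the original matrix by this truncated one perturbs the ESD by an amount (in, say, the L\'evy metric) that is small uniformly in $n$ once $C$ is large; a final scalar rescaling to restore unit variance is absorbed the same way. The identically-distributed and the uniformly-bounded-moments cases go through analogously. Hence it suffices to know the LSD for bounded i.i.d.\ input, which is the hypothesis. For assertion \ref{item2} the key estimate is the crude count: if a matched word $w$ of length $h$ has $r$ distinct letters, then once the at most $r+1$ generating vertices (including $\pi(0)$) are chosen, Property B leaves boundedly many choices for each remaining vertex, so $\#\Pi_{L,n}^*(w)\le C\,n^{r+1}$; if $w$ is matched but not pair-matched some letter occurs at least three times, forcing $r\le t-1$ when $h=2t$, so $\#\Pi_{L,n}^*(w)=O(n^{t})=o(n^{1+t})$ and $p(w)=0$.

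For assertion \ref{item3} the plan is the moment method in three steps. First, expand $\E\bigl[\tfrac1n\Tr(A_n/\sqrt n)^{2t}\bigr]=n^{-(1+t)}\sum_{w}\sum_{\pi\in\Pi_{L,n}(w)}\E\prod_s x_{L(\pi(s-1),\pi(s))}$: circuits $\pi$ that are not matched contribute $0$ by independence and mean zero, matched-but-not-pair-matched words contribute $o(1)$ by assertion \ref{item2}, and for $w\in\bw_{2t}(2)$ the product is a single square of expectation (= variance) $1$ while $\#\Pi_{L,n}(w)$ and $\#\Pi_{L,n}^*(w)$ agree up to $o(n^{1+t})$; thus the expected $2t$-th moment tends to $\sum_{w\in\bw_{2t}(2)}p(w)$ and odd moments to $0$. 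Second, a standard variance estimate for trace moments, again using Property B to control pairs of jointly-matched circuits, gives $\var\bigl(\tfrac1n\Tr(A_n/\sqrt n)^{h}\bigr)=O(n^{-2})$, so Borel--Cantelli upgrades the convergence of moments to almost-sure convergence. Third, $\#\Pi_{L,n}^*(w)\le C\,n^{1+t}$ gives $\beta_{2t}\le C\,\#\bw_{2t}(2)=C\,(2t)!/(2^t t!)$, which satisfies Carleman's condition, so the moment sequence determines a unique probability law; hence the ESD converges weakly almost surely to this (deterministic) law, with $2t$-th moment $\sum_{w\in\bw_{2t}(2)}p(w)$.

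Assertion \ref{item4} I would verify by hand. For $L_T(k,l)=|k-l|$ at most two $l$ realize a given value, so $\Delta(L_T)\le 2$; for $L_W$ at most one $l$ does, so $\Delta(L_W)\le 1$: Property B holds for both. For Assumption B, the Toeplitz link has $k_n=n\to\infty$ and $\alpha_n\le 2n$, so $k_n\alpha_n=O(n^2)$, while the Wigner link has $k_n=n(n+1)/2\to\infty$ and $\alpha_n\le 2$, so $k_n\alpha_n=O(n^2)$. That $p(w)$ exists for every pair-matched $w$ is Wigner's theorem for $L_W$ and the Bryc--Dembo--Jiang computation \cite{bry} for $L_T$. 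For a Catalan word $w$ one peels double letters one at a time; the relevant identification at a peeled double letter, $\pi(i-1)=\pi(i+1)$, is realized by both link functions and frees the single vertex $\pi(i)$ while closing up the shortened circuit, so iterating over the non-crossing structure gives $\#\Pi_{L,n}^*(w)=n(n-1)\cdots(n-t)=n^{1+t}(1+o(1))$ for the Wigner and the Toeplitz matrix alike, i.e.\ $p_W(w)=p_T(w)=1$; for a non-Catalan $w$ the peeling gets stuck and the set-equality constraints for $L_W$ then leave only $O(n^{t})$ circuits, so $p_W(w)=0$. I expect the genuinely delicate points -- both carried out in \cite{bosesen} -- to be the $O(n^{-2})$ variance bound in step two of assertion \ref{item3} and the verification that $p_T(w)$ equals $1$ \emph{exactly} (and not merely is at most $1$) for each Catalan word, where the non-crossing structure of the pairing is essential; here we need only record that Property B and Assumption B place us in the regime where those arguments apply, which is the content of assertion \ref{item4}.
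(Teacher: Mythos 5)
Your proposal is correct and matches the paper's treatment: the paper gives no proof of this Proposition at all, simply asserting that it ``follows from the results of \cite{bosesen}'' (with the Toeplitz facts traceable to \cite{bry}), and your sketch is a faithful reconstruction of exactly those arguments --- truncation via Assumption B for (\ref{item1}), the $O(n^{r+1})$ vertex count for (\ref{item2}), the trace-expansion/variance/Carleman chain for (\ref{item3}), and the direct link-function checks plus the Catalan peeling for (\ref{item4}). You also correctly flag the two genuinely nontrivial points (the summable variance bound and the exact evaluation $p_T(w)=1$ for Catalan words) as the content being imported rather than reproved.
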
%

Before we prove Theorems~\ref{theorem:wignerblock} and~\ref{theorem:toeplitzblock} we state a lemma
which quantifies some of the properties of Toeplitz and Wigner limit and these  were proved in~\cite{bry}
and~\cite{bosesen}. See also Theorem 5.1 of \cite{icm}.
%{\bf Result 1:} Let $w$ be a pair-matched word of length
%$2t$.\vskip5pt
%(i) If $w$ is not Catalan then $\lim_{n\rightarrow
%\infty}\frac{1}{n^{t+1}}|\Pi_{W_n}^*(w)|=0$.\vskip5pt

\begin{lemma}\label{result2}
 Let $w\in \bw_{2t}(2)$
%be a pair-matched word of length\ \  $2t$
and $l_0$ be the $k$-tuple $(-1,-1, \ldots,-1)$.\vskip5pt

(1)  If $l\neq l_0$ , then $\lim_{k\rightarrow
\infty}\frac{1}{k^{t+1}}\#\Pi_{L_T, k,l}^*(w)=0.$\vskip5pt

(2)  $\lim_{k\rightarrow \infty}\frac{1}{k^{t+1}}\#\Pi_{L_T,
k,l_0}^*(w)=\lim_{k\rightarrow \infty}\frac{1}{k^{t+1}}\#\Pi_{L_T,
k}^*(w)=p_T(w)$.\vskip5pt

(3)   If $w$ is Catalan, then $\forall~ l\neq l_0,$
$$\lim_{n\rightarrow \infty}\frac{1}{n^{t+1}}\#\Pi_{L_W, n,l}^*(w)=0
\text{ and }\lim_{n\rightarrow \infty}\frac{1}{n^{t+1}}\#\Pi_{L_W,
n,l_0}^*(w)=1.$$\vskip5pt
\end{lemma}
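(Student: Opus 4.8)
The plan is to handle parts (1)--(2) by a direct ``slope'' count for the Toeplitz link function, and part (3) by induction on $t$ using the recursive structure of Catalan words; in every part the heuristic is that imposing $l_i=+1$ on some letter (an $R_1$ constraint in the language of \cite{bosesen}) costs one degree of freedom, so that only the all-$(-1)$ configuration $l_0$ survives after dividing by $k^{t+1}$ (resp.\ $n^{t+1}$).

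\emph{Parts (1) and (2).} Fix $w\in\bw_{2t}(2)$, let $S=\{i_1,\ldots,i_t\}$ be its non-zero generating vertices, and for a circuit $\pi$ on $\{1,\ldots,k\}$ put $s_m=\pi(m)-\pi(m-1)$. For $\pi\in\Pi_{L_T,k,l}^*(w)$ the defining relation gives $s_{j_i}=l_i s_i$ for each $i\in S$ (with $j_i$ the second occurrence of $w[i]$), and conversely any slopes of this shape automatically match the $L_T$-values since $|l_i s_i|=|s_i|$. Hence $\Pi_{L_T,k,l}^*(w)$ is in bijection with the tuples $(\pi(0),(s_i)_{i\in S})$ for which every partial sum $\pi(0)+\sum_{m\le p}s_m$ lies in $\{1,\ldots,k\}$ and the closing relation $\sum_{m=1}^{2t}s_m=0$ holds; grouping the latter by letters rewrites it as $\sum_{i\in S:\,l_i=1}s_i=0$. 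If $l=l_0$ this is vacuous and the number of such tuples is $\Theta(k^{t+1})$; if $l\ne l_0$ the index set $\{i\in S:l_i=1\}$ is non-empty, so we have one non-trivial linear relation among the $t$ free slopes, forcing $\#\Pi_{L_T,k,l}^*(w)=O(k^{t})=o(k^{t+1})$, which is part (1). For part (2), the non-disjoint union $\Pi_{L_T,k}^*(w)=\bigcup_l\Pi_{L_T,k,l}^*(w)$ over the $2^t$ sign vectors yields $\#\Pi_{L_T,k,l_0}^*(w)\le\#\Pi_{L_T,k}^*(w)\le\#\Pi_{L_T,k,l_0}^*(w)+\sum_{l\ne l_0}\#\Pi_{L_T,k,l}^*(w)$; dividing by $k^{t+1}$, sending $k\to\infty$, using part (1) on the last sum and Proposition~\ref{prop:review}(\ref{item4}) for $k^{-(t+1)}\#\Pi_{L_T,k}^*(w)\to p_T(w)$, we get $k^{-(t+1)}\#\Pi_{L_T,k,l_0}^*(w)\to p_T(w)$.

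\emph{Part (3).} Induct on $t$, the case $t=0$ being trivial. A Catalan word $w$ has a double letter $w[i]=w[i+1]$, and deleting it leaves a Catalan word $w'$ of length $2(t-1)$; write $l'$ for $l$ with the corresponding entry removed. For $\pi\in\Pi_{L_W,n,l}^*(w)$ the constraint attached to this letter reads: if its sign is $-1$, then $\pi(i-1)=\pi(i+1)$ and the generating vertex $\pi(i)$ is unconstrained, whence $\#\Pi_{L_W,n,l}^*(w)=n\,\#\Pi_{L_W,n,l'}^*(w')(1+o(1))$; if its sign is $+1$, then $\pi(i-1)=\pi(i)=\pi(i+1)$, so $\pi(i)$ collapses and $\#\Pi_{L_W,n,l}^*(w)\le\#\Pi_{L_W,n,l'}^*(w')=O(n^{t})$. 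By the inductive hypothesis $n^{-t}\#\Pi_{L_W,n,l'}^*(w')$ tends to $1$ when $l'$ is all $-1$ and to $0$ otherwise, and is $O(1)$ in all cases; hence $n^{-(t+1)}\#\Pi_{L_W,n,l}^*(w)\to1$ exactly when the double-letter sign is $-1$ and $l'$ is all $-1$, i.e.\ when $l=l_0$, and $\to0$ for every other $l$. (The limit $1$ at $l=l_0$ also follows by comparison: the other $2^t-1$ pieces are $o(n^{t+1})$, while $n^{-(t+1)}\#\Pi_{L_W,n}^*(w)\to p_W(w)=1$ for Catalan $w$ by Proposition~\ref{prop:review}(\ref{item4}).)

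The routine estimates aside, the delicate point is the inductive step in part (3): one must verify that deleting the double letter gives a bijection, up to an $o(n^{t})$ error coming from coincidences among otherwise distinct vertices, between the relevant circuit classes for $w$ and for $w'$, and that $l\mapsto l'$ correctly transports the constraints attached to the surviving letters. This is precisely the word/circuit reduction carried out for Wigner and for general patterned matrices in \cite{bana:bose} and \cite{bosesen}, which I would invoke rather than reprove; the only genuinely new ingredient is the elementary $+1$-versus-$-1$ dichotomy at the double letter recorded above.
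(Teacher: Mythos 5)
Your proof is correct, but there is nothing in the paper to compare it against: the authors state Lemma~\ref{result2} without proof, attributing it to \cite{bry}, \cite{bosesen} and Theorem 5.1 of \cite{icm}. Your reconstruction is essentially the standard argument from those sources. For parts (1)--(2), the parametrization of $\Pi_{L_T,k,l}^*(w)$ by $\bigl(\pi(0),(s_i)_{i\in S}\bigr)$ with the determined slopes $s_{j_i}=l_is_i$, the reduction of the circuit condition to $\sum_{i:\,l_i=1}s_i=0$, and the resulting loss of one degree of freedom when $l\neq l_0$, is exactly the Bryc--Dembo--Jiang slope count; the sandwich $\#\Pi_{L_T,k,l_0}^*(w)\leq\#\Pi_{L_T,k}^*(w)\leq\#\Pi_{L_T,k,l_0}^*(w)+\sum_{l\neq l_0}\#\Pi_{L_T,k,l}^*(w)$ combined with Proposition~\ref{prop:review}(4) then gives (2) cleanly. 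For part (3), the $+1$/$-1$ dichotomy at a double letter ($\pi(i-1)=\pi(i)=\pi(i+1)$ versus a free $\pi(i)$ with $\pi(i-1)=\pi(i+1)$) and the induction on the reduced Catalan word are right. One remark: the ``delicate point'' you flag at the end is less delicate than you suggest. Because the classes $\Pi^*$ are defined by the one-sided implication $w[a]=w[b]\Rightarrow L(\pi(a-1),\pi(a))=L(\pi(b-1),\pi(b))$, inserting a free vertex can never violate a constraint, so deletion of the double letter gives an \emph{exact} $n$-to-one correspondence $\Pi_{L_W,n,l}^*(w)\to\Pi_{L_W,n,l'}^*(w')$ in the $l_i=-1$ case and an exact injection in the $l_i=+1$ case; the $o(n^t)$ coincidence corrections you mention are only needed for the equivalence classes $\Pi$ defined with $\Leftrightarrow$, which do not appear here. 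So no appeal to \cite{bana:bose} is actually required at that step.
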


\subsection{Proof of Theorem \ref{theorem:wignerblock}}
We begin by proving a few lemma involving the block  and entry
address functions.
\begin{lemma}\label{lemma:wigner}
Let $w \in \bw_{2t}(2)$.
%be a pair-matched word of length $2t$.
\begin{enumerate}
\item \label{toeplitz}
Let $\pi_b\in \Pi_{L_T, k}^*(w)$ and $l^1,l^2,...,l^m$ be all the
distinct values of $l$ such that, $\pi_b\in \Pi_{L_T, k,l}^*(w)$.
Then,
$$\#\phi_B^{-1}(\pi_b)=\#\bigcup_{l\in\{l^1,...,l^m\}}\Pi_{L_W, n,l}^*(w).$$
\item \label{wigner}
Let $\pi_e\in \Pi_{L_W, n}^*(w)$ and $l^1,l^2,...,l^m$ be all the
distinct values of $l$ such that, $\pi_e\in \Pi_{L_W, n,l}^*(w)$.
Then,
$$\#\phi_A^{-1}(\pi_e)=\# \bigcup_{l\in\{l^1,...,l^m\}}\Pi_{L_T, k,l}^*(w).$$
\end{enumerate}
\end{lemma}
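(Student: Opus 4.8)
The plan is to exploit the fact that the pair $(\pi_b,\pi_e)$ determines $\pi$ uniquely, so that counting the fibre $\phi_B^{-1}(\pi_b)$ amounts to counting exactly those entry addresses $\pi_e$ for which the glued-together $\pi=(\pi_b,\pi_e)$ lies in $\Pi_{L,nk}^*(w)$. I would first recall the membership criterion: $\pi\in\Pi_{L,nk}^*(w)$ iff $w[i]=w[j]$ implies $L(\pi(i-1),\pi(i))=L(\pi(j-1),\pi(j))$, and since $L=(m,L_{2,m})$, this splits into the block-level condition $L_1(\pi_b(i-1),\pi_b(i))=\pm L_1(\pi_b(j-1),\pi_b(j))$ together with the entry-level condition on $\pi_e$. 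The block-level condition is automatic here because $\pi_b\in\Pi_{L_T,k}^*(w)$ is given; so the only constraint on $\pi_e$ is the entry-level one, which (from the discussion preceding the lemma, case (i) of the i.i.d. blocks) says that for each matched pair $(i,j)$ either $(\pi_e(i-1),\pi_e(i))=(\pi_e(j-1),\pi_e(j))$ or $(\pi_e(i-1),\pi_e(i))=(\pi_e(j),\pi_e(j-1))$.

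Next I would make precise how the sign choice is dictated by $\pi_b$. For a matched pair $(i,j)$, the edge in the block structure at position $i$ is $L_1(\pi_b(i-1),\pi_b(i))=\pi_b(i-1)-\pi_b(i)$ (Toeplitz block link function $L_1(a,b)=a-b$), and similarly at $j$; since these must agree up to sign, write $\varepsilon_{ij}=+1$ if $\pi_b(i-1)-\pi_b(i)=\pi_b(j-1)-\pi_b(j)$ and $\varepsilon_{ij}=-1$ if $\pi_b(i-1)-\pi_b(i)=-(\pi_b(j-1)-\pi_b(j))$ (when both hold, i.e. the edge value is $0$, either sign is allowed). Restricting attention to generating vertices $i\in S$ with partner $j_i$, the vector of signs $(\varepsilon_{i j_i})_{i\in S}$ is precisely one of the $l$-vectors $l^1,\dots,l^m$ for which $\pi_b\in\Pi_{L_T,k,l}^*(w)$ — this is exactly the definition of $\Pi_{L_T,k,l}^*(w)$, recalling that $\pi_b(i-1)-\pi_b(i)=l_i(\pi_b(j_i-1)-\pi_b(j_i))$. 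Now observe that a valid $\pi_e$ must, for each generating pair, obey the $R_1$-type identity $(\pi_e(i-1),\pi_e(i))=(\pi_e(j_i-1),\pi_e(j_i))$ when the forced sign is $+1$ and the $R_2$-type identity $(\pi_e(i-1),\pi_e(i))=(\pi_e(j_i),\pi_e(j_i-1))$ when it is $-1$; but the forced sign may be either choice when the block edge value is $0$. Hence $\pi_e$ is admissible iff it lies in $\Pi_{L_W,n,l}^*(w)$ for \emph{some} $l\in\{l^1,\dots,l^m\}$, giving $\#\phi_B^{-1}(\pi_b)=\#\bigcup_{l\in\{l^1,\dots,l^m\}}\Pi_{L_W,n,l}^*(w)$, which is part~(\ref{toeplitz}). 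Part~(\ref{wigner}) is entirely symmetric: start with $\pi_e\in\Pi_{L_W,n}^*(w)$, note the entry-level condition is then satisfied, and the surviving constraint on $\pi_b$ is the Toeplitz matching $L_T(\pi_b(i-1),\pi_b(i))=L_T(\pi_b(j-1),\pi_b(j))$ with sign governed by which of the $R_1/R_2$ alternatives $\pi_e$ realizes at each pair — so $\pi_b$ ranges over $\bigcup_{l\in\{l^1,\dots,l^m\}}\Pi_{L_T,k,l}^*(w)$ where now $l^1,\dots,l^m$ are the sign-vectors compatible with $\pi_e$.

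The step I expect to be the main obstacle is the bookkeeping around edges whose block (resp. entry) value is zero, where the sign is \emph{not} uniquely forced: there one must be careful that "the distinct values of $l$ such that $\pi_b\in\Pi_{L_T,k,l}^*(w)$" is genuinely the full index set of the union, and that no admissible $\pi_e$ is double-counted or missed. The clean way to handle this is to check the two inclusions of the union separately — every element of a fibre gives \emph{some} compatible sign vector, and conversely every $\pi_e$ in $\Pi_{L_W,n,l}^*(w)$ for a compatible $l$ glues to a valid $\pi$ — rather than trying to set up a bijection index-by-index. A secondary point to verify is that the $0$-valued case for the Wigner entry link function $L_W(a,b)=(\min,\max)$ corresponds exactly to $a=b$, and there too both $R_1$ and $R_2$ constraints coincide, so the symmetric argument for part~(\ref{wigner}) goes through without a special exception. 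Once these edge cases are dispatched, both identities are immediate from the uniqueness of the $(\pi_b,\pi_e)$ decomposition.
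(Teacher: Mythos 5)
Your proposal is correct and follows essentially the same route as the paper: both use the unique decomposition $\pi\leftrightarrow(\pi_b,\pi_e)$ to identify the fibre $\phi_B^{-1}(\pi_b)$ with the set of admissible entry addresses, extract a sign vector $l^*$ from the $R_1/R_2$ pattern of each matched pair (allowing either sign in the degenerate case where the two alternatives coincide), show $l^*$ must be one of the compatible $l^1,\dots,l^m$, and then verify the reverse inclusion by gluing any $\pi_e\in\Pi_{L_W,n,l}^*(w)$ with $\pi_b\in\Pi_{L_T,k,l}^*(w)$ back into a valid circuit. The only cosmetic difference is the direction in which the sign vector is read off ($\pi_b$ forcing $\pi_e$ versus the paper's $\pi_e$ determining $l^*$ and hence a condition on $\pi_b$), which does not change the argument.
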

\begin{proof}[Proof of Lemma~\ref{lemma:wigner}]
(1) Let $\pi \in \phi_B^{-1}(\pi_b)$. Then we claim that
$\pi_e=\phi_A(\pi) \in \bigcup_{l\in\{l^1,...,l^m\}}\Pi_{L_W,
n,l}^*(w)$. To prove this, let $i_1,i_2,..,i_t$ be all the
non-zero generating vertices for $w$. Since $\pi_e\in \Pi_{L_W,
n}^*(w)$, then either,
$(\pi_e(i-1),\pi_e(i))=(\pi_e(j_i-1),\pi_e(j_i))$ or
$(\pi_e(i-1),\pi_e(i))=(\pi_e(j_i),\pi_e(j_i-1))$. For $i\in
\{i_1,i_2,..,i_t\} $, define
\begin{equation*}
m_i =\begin{cases}1 &\quad\text{if } (\pi_e(i-1),\pi_e(i))=(\pi_e(j_i-1),\pi_e(j_i))\\
      -1&\quad\text{if } (\pi_e(i-1),\pi_e(i))=(\pi_e(j_i),\pi_e(j_i-1)).
     \end{cases}
\end{equation*}
Define $m_i$ to be any of 1 or -1 if both the conditions are
satisfied simultaneously (this happens when
$\pi_e(i-1)=\pi_e(i)$). As $\pi \in \Pi_{L, nk}^*(w)$ and
$\phi_A(\pi)=\pi_e, \phi_B(\pi)=\pi_b$, $m_i=1 \Rightarrow
\pi_b(i-1)-\pi_b(i)=\pi_b(j_i-1)-\pi_b(j_i)$, and $m_i=-1
\Rightarrow \pi_b(i-1)-\pi_b(i)=-(\pi_b(j_i-1)-\pi_b(j_i))$. Let
$l^*=(m_{i_1},...,m_{i_t})$. Clearly then, $\pi_e\in \Pi_{L_W,
n,l^*}^*(w)$. And it follows from the above argument that, $\pi_b
\in \Pi_{L_T, k,l^*}^*(w)$. By hypothesis, $l^* \in
\{l^1,l^2,...,l^m\}$ and hence, $\pi_e\in
\bigcup_{l\in\{l^1,...,l^m\}}\Pi_{L_W, n,l}^*(w)$.

Now the map $$\phi_B^{-1}(\pi_b)\rightarrow
\bigcup_{l\in\{l^1,...,l^m\}}\Pi_{L_W, n,l}^*(w) : {\pi\to \pi_e}$$
is clearly injective. To show that it is surjective, it is easy to
see that if $\pi_b\in \Pi_{L_T, k,l}^*(w)$, then for any $\pi_e \in
\Pi_{L_W, n,l}^*(w)$, the circuit determined by $(\pi_b,\pi_e)$ is
indeed in $\Pi_{L, nk}^*(w)$. This shows $\#\phi_B^{-1}(\pi_b)=\#
\bigcup_{l\in\{l^1,...,l^m\}}\Pi_{L_W, n,l}^*(w)$.

(ii) Proof of this part is similar to that of the previous one.
Let $\pi \in \phi_A^{-1}(\pi_e)$. Then we claim that
$\pi_b=\phi_B(\pi) \in \bigcup_{l\in\{l^1,...,l^m\}}\Pi_{L_T,
k,l}^*(w)$. To prove this, let $i_1,i_2,..,i_t$ be all the
non-zero generating vertices for $w$. Since $\pi_b\in \Pi_{L_T,
k}^*(w)$, then, $(\pi_b(i-1)-\pi_b(i))=\pm
(\pi_b(j_i-1)-\pi_b(j_i))$. For $i\in \{i_1,i_2,..,i_t\} $, define
\begin{equation*}
m_i=\begin{cases}1&\quad\text{if}\quad (\pi_b(i-1)-\pi_b(i))=(\pi_b(j_i-1)-\pi_b(j_i))\\
-1&\quad\text{if}\quad(\pi_b(i-1)-\pi_b(i))=-(\pi_b(j_i)-\pi_b(j_i-1)).
\end{cases}
\end{equation*}
As $\pi \in \Pi_{L, nk}^*(w)$ and $\phi_B(\pi)=\pi_b,
\phi_A(\pi)=\pi_e$, $m_i=1 \Rightarrow
(\pi_b(i-1),\pi_b(i))=(\pi_b(j_i-1),\pi_b(j_i))$, and $m_i=-1
\Rightarrow (\pi_b(i-1),\pi_b(i))=(\pi_b(j_i),\pi_b(j_i-1))$. Let
$l^*=(m_{i_1},...,m_{i_t})$. It follows from the above argument
that, $\pi_e \in \Pi_{L_W, n,l^*}^*(w)$. Now, $l^* \in
\{l^1,l^2,...,l^m\}$ by hypothesis, and hence, $\pi_b \in
\bigcup_{l\in\{l^1,...,l^m\}}\Pi_{L_T, k,l}^*(w)$ as in the
previous lemma. Now the map
$$\phi_A^{-1}(\pi_e)\mapsto
\bigcup_{l\in\{l^1,...,l^m\}}\Pi_{L_T, k,l}^*(w):\pi\mapsto
\pi_b$$ is as before a bijection, and hence the lemma is proved.
\end{proof}

Now we are ready to prove the theorem.\vskip5pt

%\textit{Theorem \ref{theorem:wignerblock}}.
%It follows from
In view of  Proposition \ref{prop:review}~\eqref{item1}, without
loss of generality, we assume the input sequences to be uniformly
bounded.
%Also the block structure $B_k$ satisfies Assumption A, and
Since the blocks $A_{n,i}$ satisfy Assumption B, it is clear that
%. Together, these imply that
$B_k\star A_{n, i}$ satisfies Property B. Hence
Proposition~\ref{prop:review} ~\eqref{item3} implies that
%to prove the existence of LSD
it is enough to show that, for every $t>0$ and
for every $w \in \bw_{2t}(2)$,
%pair-matched word $w$ of length $2t$,
$p(w)$
exists.\vskip5pt

(i) Let $k$ be fixed and let $n\rightarrow \infty$.  Let $w \in \bw_{2t}(2)$.
% be a pair-matched word of length $2t$.
We need to show that $\lim_{n\rightarrow
\infty}\frac{1}{(nk)^{t+1}}\#\Pi_{L, nk}^*(w)$ exists. Note that,
\begin{equation}
\label{dec1} \#\Pi_{L, nk}^*(w)=\sum_{\pi_b\in \Pi_{L_T,
k}^*(w)}\#\phi_B^{-1}(\pi_b).
\end{equation}
Let $\pi_b\in \Pi_{L_T, k}^*(w)$ and let $l^1,...,l^m$ as in
Lemma~\ref{lemma:wigner} \eqref{toeplitz}. Hence,
\begin{equation}
\label{twdec}
\frac{\#\phi_B^{-1}(\pi_b)}{(nk)^{t+1}}=\frac{1}{k^{t+1}}\frac{\#\bigcup_{l\in\{
l^1,...,l^m\}}\Pi_{L_W, n,l}^*(w)}{n^{t+1}}.
\end{equation}

Hence, from Lemma~\ref{result2} (3)  and \eqref{twdec}, it is
clear that
\begin{eqnarray}
\lim_{n\rightarrow \infty}
\frac{\#\phi_B^{-1}(\pi_b)}{(nk)^{t+1}}=\left\{
\begin{array}{l}
\frac{1}{k^{t+1}}~\text{if $w$ is Catalan and } \pi_b\in \Pi_{L_T,
k,l_0}^*(w)\\ \\
%\lim_{n\rightarrow \infty}\frac{|\phi_B^{-1}(\pi_b)|}{(nk)^{t+1}}
0,~~\text{otherwise}.
\end{array}
\right.
\end{eqnarray}

Now from (\ref{dec1}), it is clear that,
\begin{eqnarray*}
p(w)= \lim_{n\rightarrow \infty}\frac{1}{(nk)^{t+1}}\# \Pi_{L,
nk}^*(w)= \left\{ \begin{array}{l} \frac{\#\Pi_{L_T,
k,l_0}^*(w)}{k^{t+1}}~\text {if $w$ Catalan}\\
\\
%\lim_{n\rightarrow \infty}\frac{1}{(nk)^{t+1}}\#\Pi_{L,nk}^*(w)=
0~\text{otherwise}.
\end{array} \right.
\end{eqnarray*}

In particular, $p(w)$ exists for all pair-matched $w$ and hence
LSD exists. All the odd moments are 0 and the $(2t)-th$ moment of
the limiting distribution is given by
$$\beta_{2t}=\sum_{w \in \bw_{2t}(2)}
%w~{\rm pair-matched}:|w|=t}
p(w)= \sum_{w \in \bw_{2t}(2)}
%w~{\rm catalan} }
\frac{\#\Pi_{L_T, k,l_0}^*(w)}{k^{t+1}}. $$

 (ii) Now let $n$ be fixed and let $k\rightarrow \infty$.  Let $w\in \bw_{2t}(2)$.
% be a pair-matched word of length $2t$.
We need to show that $\lim_{k\rightarrow
\infty}\frac{1}{(nk)^{t+1}}\#\Pi_{L, nk}^*(w)$ exists. Note that,
\begin{equation}
\label{dec2} \#\Pi_{L, nk}^*(w)=\sum_{\pi_e\in \Pi_{L_W, n}^*(w)}\#
\phi_A^{-1}(\pi_e).
\end{equation}
Let $\pi_e\in \Pi_{L_W, n}^*(w)$ and let $l^1,...,l^m$ as in Lemma
\ref{lemma:wigner}. Hence,
\begin{equation}
\label{twdec2}
\frac{\#\phi_A^{-1}(\pi_e)}{(nk)^{t+1}}=\frac{1}{n^{t+1}}\frac{\#\bigcup_{l\in\{
l^1,...,l^m\}}\Pi_{L_T, k,l}^*(w)}{k^{t+1}}.
\end{equation}

Hence, from Lemma~\ref{result2} and \eqref{twdec2}, it is clear that
$$p(w)=\lim_{k\rightarrow
\infty}\frac{1}{(nk)^{t+1}}\# \Pi_{L, nk}^*(w)=\frac{\#\Pi_{L_W,
n,l_0}^*(w)}{n^{t+1}} p_T(w).$$ Hence LSD of
$\frac{1}{\sqrt{nk}}B_k\star A_{n,i}$ exists if $n$ is fixed and
$k\rightarrow \infty$, is symmetric and is determined by its even
moments
$$\beta_{2t}=\sum_{w \in \bw_{2t}(2)}
%w~{\rm pair-matched}:|w|=t}
\frac{\#\Pi_{L_W, n,l_0}^*(w)}{n^{t+1}} p_T(w).$$ (iii) From the
proofs of (i) and (ii) it is clear that, for any fixed $n$ and $k$,
$$(\#\Pi_{L_W, n,l_0}^*(w)) (\#\Pi_{L_T, k,l_0}^*(w))\leq \#\Pi_{L, nk}^*(w)\leq
\#(\Pi_{L_W, n}^*(w)) (\#\Pi_{L_T, k}^*(w)). $$ Now, we know that
$p_T(w)=1$ if $w$ is a Catalan word. Hence, using
Lemma~\ref{result2},
$$\lim_{n,k \rightarrow
\infty}\frac{\#\Pi_{L_W, n,l_0}^*(w)}{n^{t+1}}\frac{\#\Pi_{L_T,
k,l_0}^*(w)}{k^{t+1}}= \lim_{n,k \rightarrow
\infty}\frac{\#\Pi_{L_W, n}^*(w)}{n^{t+1}}\frac{\#\Pi_{L_T,
k}^*(w)}{k^{t+1}}=1~\text{or}~0$$ according as $w$ is Catalan or
non-Catalan.

 Sandwiching we get $p(w)=\lim_{n,k\rightarrow
\infty}\frac{\#\Pi_{L, nk}^*(w)}{(nk)^{t+1}}=1$ or $0$ according as
$w$ is Catalan or non-Catalan. As a consequnce, the LSD  exists and
is semicircular
% and its even moments are $\beta_{2t}=\frac{(2t)!}{t!(t+1)!}$ (the number of
%Catalan words). These are the moments  of a semicircular law
and the
proof is complete.
%\end{proof}
%\begin{remark}
%{\textsf  need to add remarks}
%\end{remark}
%\subsection{Asymmetric Toeplitz Block Structure with Toeplitz Blocks}
%As another example of an asymmetric block structure we consider
%Asymmetric Toeplitz Block Structure with Toeplitz Blocks. Let $B_k$
%be an asymmetric Toeplitz block structure. That is, the link
%function $L_1$ of $B_k$ is given by $L_1(i,j) = i-j$. The blocks are
%taken as per the following assumption.\vskip5pt
%
%{\bf Assumption C:}  For every $n$, let $A_{0,n}$ be a symmetric
%Toeplitz Matrix with link function $L_{2,0}(k,l)=|k-l|$. Let
%$A_{n , i}$ be an asymmetric Toeplitz matrix with Link function
%$L_2(k,l)=k-l$ for every $i\geq 1$. Also let us assume that
%$\{A_{n , i}\}_{i\geq 0}$ satisfies Assumption A. For $i<0$,
%$A_{n , i}=A_{-i,n}'$.\vskip5pt
%
%Here also, $B_k\star A_{n,i}$ is a symmetric matrix and Assumption A and
% Assumption B are satisfied. We shall make an abuse of notation again and call
% this matrix a block matrix with Toeplitz blocks and Toeplitz block structure and
% will denote it by $B_k\star A_{n}$.
%
%Let us take the same definitions of Block Address and Entry Address
%for this block matrix as in the previous case. It is easy to see the
%following.\vskip5pt

\subsection{Proof of Theorem \ref{theorem:toeplitzblock}}
Let $w \in \bw_{2t}(2)$.
% be a pair-matched word of length $2t$.
Let $\pi \in
\Pi_{L, nk}^*(w)$. Let $\pi_b$ be the block address of $\pi$. Then
$\pi_b \in \Pi_{L_T, k}^*(w)$ where $\Pi_{L_T, k}^*(w)$ is as in
the previous case. As before let us denote by $\phi_B$ , the block
address function $\Pi_{L, nk}^*(w)\mapsto \Pi_{L_T, k}^*(w): \pi
\mapsto \pi_b$.

Now let $\pi \in \Pi_{L, nk}^*(w)$, Let $\pi_e$ be the entry
address of $\pi$. It follows that
$$L(\pi(i-1),\pi(i))= L(\pi(j-1),\pi(j))\Rightarrow
L_{2,m_1}(\pi_e(i-1),\pi_e(i))= L_{2,m_2}(\pi_e(j-1),\pi_e(j))$$
where $m_1=L_1(\pi_b(i-1),\pi_b(i))$ and
$m_2=L_1(\pi_b(j-1),\pi_b(j))$. Now
$L_{2,m_1}(\pi_e(i-1),\pi_e(i))= L_{2,m_2}(\pi_e(j-1),\pi_b(j))$
implies $m_1=\pm m_2$ and hence
$(\pi_e(i-1)-\pi_e(i))=\pm(\pi_e(j-1)-\pi_e(j))$ and hence
$\pi_e\in \Pi_{L_T, n}^*(w)$.
%where $\Pi_{T_n}^*(w)$ denotes $\Pi^*(w)$ for an $n
%\times n$ symmetric Toeplitz Matrix.
As before let us denote by $\phi_A$ , the entry address function
$\Pi_{L, nk}^*(w)\mapsto \Pi_{L_T, n}^*(w): \pi \mapsto \pi_e$.

Analogous to Lemma \ref{lemma:wigner} we have the following lemma
whose proof is omitted.
\begin{lemma}\label{lemma:toeplitz}Let $w \in \bw_{2t}(2)$.
% be a pair-matched word of length $2t$.
\begin{enumerate}
\item \label{tt1}
 Let $\pi_b\in \Pi_{L_T, k}^*(w)$ and
$l^1,l^2,...,l^m$ be all the distinct values of $l$ such that,
$\pi_b\in \Pi_{L_T, k,l}^*(w)$. Then,
$$\#\phi_B^{-1}(\pi_b)=\#\bigcup_{l\in\{l^1,...,l^m\}}\Pi_{L_T, n,l}^*(w).$$
\item \label{tt2}
Let  $\pi_e\in \Pi_{L_T, n}^*(w)$ and $l^1,l^2,...,l^m$ be all the
distinct values of $l$ such that, $\pi_e\in \Pi_{L_T, n,l}^*(w)$.
Then,
$$\#\phi_A^{-1}(\pi_e)=\#\bigcup_{l\in\{l^1,...,l^m\}}\Pi_{L_T, k,l}^*(w).$$
\end{enumerate}
\end{lemma}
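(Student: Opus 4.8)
The plan is to transcribe the proof of Lemma~\ref{lemma:wigner} almost verbatim, the only change being that the within-block constraints --- which for i.i.d.\ blocks were equalities of position pairs --- now become equalities of the within-block Toeplitz links $\pi_e(\cdot-1)-\pi_e(\cdot)$ up to sign. The sign bookkeeping that couples the block level to the entry level goes through unchanged, because both the block-structure link $L_1(a,b)=a-b$ and the asymmetric Toeplitz link used inside the blocks change sign under transposition, and the effective (symmetrised) entry constraint one is left with is exactly the symmetric Toeplitz link $L_T$, which is why $\pi_e$ lands in $\Pi_{L_T,n}^*(w)$.

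For part~\eqref{tt1}: fix $\pi_b\in\Pi_{L_T,k}^*(w)$, take any $\pi\in\phi_B^{-1}(\pi_b)$, and set $\pi_e=\phi_A(\pi)$. Let $i_1,\dots,i_t$ be the non-zero generating vertices and, for $i\in S$, let $j_i$ be the second occurrence of $w[i]$. Since $\pi_e\in\Pi_{L_T,n}^*(w)$ one has $\pi_e(i-1)-\pi_e(i)=\pm(\pi_e(j_i-1)-\pi_e(j_i))$; put $m_i=1$ for the $+$ sign and $m_i=-1$ for the $-$ sign (either value when $\pi_e(i-1)=\pi_e(i)$, where both hold). Because $\pi\in\Pi_{L,nk}^*(w)$ and two equal entries of $B_k\star A_{n,i}$ lie in blocks that are either the same (their within-block links then agree, $+$ sign) or transposes of one another (their within-block links are opposite, $-$ sign), each $m_i$ forces the matching block increment: $m_i=1$ gives $\pi_b(i-1)-\pi_b(i)=\pi_b(j_i-1)-\pi_b(j_i)$ and $m_i=-1$ gives $\pi_b(i-1)-\pi_b(i)=-(\pi_b(j_i-1)-\pi_b(j_i))$. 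Hence, with $l^*=(m_{i_1},\dots,m_{i_t})$, we simultaneously get $\pi_e\in\Pi_{L_T,n,l^*}^*(w)$ and $\pi_b\in\Pi_{L_T,k,l^*}^*(w)$, so $l^*\in\{l^1,\dots,l^m\}$ and $\pi\mapsto\pi_e$ maps $\phi_B^{-1}(\pi_b)$ into $\bigcup_{l\in\{l^1,\dots,l^m\}}\Pi_{L_T,n,l}^*(w)$.

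This map is injective because $(\pi_b,\pi_e)$ determines $\pi$, and surjective because if $\pi_b\in\Pi_{L_T,k,l}^*(w)$ with $l$ in the list and $\pi_e\in\Pi_{L_T,n,l}^*(w)$, then for every matched pair $w[i]=w[j]$ the matching signs on the block side and the entry side recombine to give $L(\pi(i-1),\pi(i))=L(\pi(j-1),\pi(j))$, so the reconstructed circuit lies in $\Pi_{L,nk}^*(w)$. This yields $\#\phi_B^{-1}(\pi_b)=\#\bigcup_{l\in\{l^1,\dots,l^m\}}\Pi_{L_T,n,l}^*(w)$, which is~\eqref{tt1}. Part~\eqref{tt2} is the mirror argument with the roles of $\pi_b$ and $\pi_e$ swapped: from $\pi_e\in\Pi_{L_T,n}^*(w)$ and $\pi\in\phi_A^{-1}(\pi_e)$, read the signs $m_i$ off $\pi_b(i-1)-\pi_b(i)=\pm(\pi_b(j_i-1)-\pi_b(j_i))$, propagate them to the within-block constraints on $\pi_e$ via $\pi\in\Pi_{L,nk}^*(w)$, and close with the same bijection.

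The one point that genuinely requires attention --- the step I would flag as the main obstacle --- is the coherence of the single parameter $l\in\{-1,1\}^t$ across both levels: one must check that replacing a block by its transpose reverses its within-block Toeplitz link in exactly the same way it reverses $L_1$, so that a $+$ (resp.\ $-$) at the block level always travels with a $+$ (resp.\ $-$) at the entry level for the same letter. The diagonal block $A_{0,n}$, being symmetric, has a sign-insensitive link $|a-b|$, and the degenerate case $\pi_e(i-1)=\pi_e(i)$ is also sign-insensitive; both are absorbed by the same ``choose either sign'' convention already used in Lemma~\ref{lemma:wigner}. Once this bookkeeping is set up, the remainder is a routine copy of that proof.
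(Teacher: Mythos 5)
Your proposal is correct and is essentially the paper's own argument: the paper omits the proof of this lemma, stating only that it is analogous to Lemma~\ref{lemma:wigner}, and your write-up is exactly that adaptation, with the Wigner-type entry constraints replaced by signed Toeplitz increments and the same sign bookkeeping and bijection $(\pi_b,\pi_e)\leftrightarrow\pi$. The point you flag --- that transposing a block reverses the within-block Toeplitz link in step with $L_1$ --- is indeed the only place the two cases differ, and you handle it correctly.
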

%Now we are ready to state and prove the main theorem of this
%subsection.

%\noindent \textit{Proof  of Theorem \ref{theorem:toeplitzblock}}.
The proof of this theorem is along the same lines as the proof of
the previous theorem.
%Again, without loss of generality, we can
%assume the input sequences to be uniformly bounded by
%Proposition~\ref{prop:review}~\eqref{item1}. It is easy to see that
%$B_k\star A_n$ satisfies Property B. Hence
%Proposition~\ref{prop:review}~\eqref{item3} implies that to prove
%the existence of LSD it is enough to show that, for every $t>0$ and
%for every pair-matched word $w$ of length $(2t)$, $p(w)$
%exists.\vskip5pt

\noindent (i) Let $k$ be fixed and let $n\rightarrow \infty$.  Let
$w \in \bw_{2t}(2)$.
% be a pair-matched word of length $2t$.
We need to show that $\lim_{n\rightarrow
\infty}\frac{1}{(nk)^{t+1}}\#\Pi_{L, nk}^*(w)$ exists. Note that,
\begin{equation}
\label{dec3} \#\Pi_{L, nk}^*(w)=\sum_{\pi_b\in \Pi_{L_T,
k}^*(w)}\#\phi_B^{-1}(\pi_b).
\end{equation}
Let $\pi_b\in \Pi_{L_T, k}^*(w)$ and let $l^1,...,l^m$ as in
Lemma~\ref{lemma:toeplitz} \eqref{tt1}. Hence,
\begin{equation}
\label{ttdec1}
\frac{\#\phi_B^{-1}(\pi_b)}{(nk)^{t+1}}=\frac{1}{k^{t+1}}\frac{\#\bigcup_{l\in\{
l^1,...,l^m\}}\Pi_{L_T, n,l}^*(w)}{n^{t+1}}.
\end{equation}
Now using Lemma~\ref{result2},
\begin{eqnarray}
\lim_{n\rightarrow \infty}
\frac{\#\phi_B^{-1}(\pi_b)}{(nk)^{t+1}}&=&\frac{1}{k^{t+1}}p_T(w)~\text{if}
~\pi_b\in \Pi_{L_T, k,l_0}^*(w)\\
\lim_{n\rightarrow \infty}\frac{\#\phi_B^{-1}(\pi_b)}{(nk)^{t+1}}&=&
0,~~\text{otherwise}.
\end{eqnarray}
Now from (\ref{dec3}), it is clear that,
\begin{equation}
p(w)=\lim_{n\rightarrow \infty}\frac{1}{(nk)^{t+1}}\#\Pi_{L,
nk}^*(w)=\frac{\#\Pi_{L_T, k,l_0}^*(w)}{k^{t+1}} p_T(w).
\end{equation}
In particular, $p(w)$ exists for all pair-matched $w$ and hence
LSD exists. All the odd moments are 0 and the $2t-th$ moment of
the limiting distribution is given by
$$\beta_{2t}=\sum_{w \in \bw_{2t}(2)}
%w~\text{pair-matched}:|w|=t}
p(w)= \sum_{w \in \bw_{2t}(2)}
%w~\text{pair-matched} }
\frac{\#\Pi_{L_T, k,l_0}^*(w)}{k^{t+1}}p_T(w).
$$ \noindent (ii) Proof of (ii) is exactly similar to that of (i),
except that we use Lemma~\ref{lemma:toeplitz}\eqref{tt2} instead
of Lemma~\ref{lemma:toeplitz} \eqref{tt1}.\vskip5pt

\noindent (iii) It is clear that for any fixed $n$ and $k$,
$$(\#\Pi_{L_T, n,l_0}^*(w)) (\#\Pi_{L_T, k,l_0}^*(w))\leq \#\Pi_{L, nk}^*(w)\leq
(\#\Pi_{L_T, n}^*(w)) (\#\Pi_{L_T, k}^*(w)). $$ Also, using
Lemma~\ref{result2},
$$\lim_{n,k \rightarrow
\infty}\frac{\#\Pi_{L_T, n,l_0}^*(w)}{n^{t+1}}\frac{\#\Pi_{L_T,
k,l_0}^*(w)}{k^{t+1}}= \lim_{n,k \rightarrow
\infty}\frac{\#\Pi_{L_T, n}^*(w)}{n^{t+1}}\frac{\#\Pi_{L_T,
k}^*(w)}{k^{t+1}} =(p_T(w))^2$$ for every pair-matched  $w$ of
length $(2t)$.

Sandwiching, we get $p(w)=\lim_{n,k\rightarrow
\infty}\frac{\#\Pi_{L, nk}^*(w)}{(nk)^{t+1}}=(p_T(w))^2$ for every
$w \in \bw_{2t}(2)$.
%of length $2t$.
Hence the LSD in this case exists
and is identified by the even moments
$$\beta_{2t}=\sum_{w \in \bw_{2t}(2)}
%w~\text{pair-matched}:|w|=t}
(p_T(w))^2.$$
%\end{proof}

%\begin{remark}
%We are not able to identify this LSD with some known distribution. We present
%some simulation results in Appendix.
%\end{remark}
%\bibliography{reference}
\footnotesize

\end{document}